\newcommand{\keywords}[1]{\par\addvspace\baselineskip
\noindent\keywordname\enspace\ignorespaces#1}
\newcommand{\BH}{{\rm BH}}
\newcommand{\GH}{{\rm GH}}
\begin{document}


\renewenvironment{proof}{\noindent\textit{Proof.}}{\hfill{$\Box$}}


\newcommand{\binomial}[2]{\left(\begin{array}{c}#1\\#2\end{array}\right)}
\newcommand{\zar}{{\rm zar}}
\newcommand{\an}{{\rm an}}
\newcommand{\red}{{\rm red}}
\newcommand{\codim}{{\rm codim}}
\newcommand{\rank}{{\rm rank}}
\newcommand{\Pic}{{\rm Pic}}
\newcommand{\Div}{{\rm Div}}
\newcommand{\Hom}{{\rm Hom}}
\newcommand{\im}{{\rm im}}
\newcommand{\Spec}{{\rm Spec}}
\newcommand{\sing}{{\rm sing}}
\newcommand{\reg}{{\rm reg}}
\newcommand{\Char}{{\rm char}}
\newcommand{\Tr}{{\rm Tr}}
\newcommand{\tr}{{\rm tr}}
\newcommand{\supp}{{\rm supp}}
\newcommand{\Gal}{{\rm Gal}}
\newcommand{\Min}{{\rm Min \ }}
\newcommand{\Max}{{\rm Max \ }}
\newcommand{\Span}{{\rm Span  }}

\newcommand{\Frob}{{\rm Frob}}
\newcommand{\lcm}{{\rm lcm}}

\newcommand{\ifc}{{\rm if \ }}

\newcommand{\soplus}[1]{\stackrel{#1}{\oplus}}
\newcommand{\dlog}{{\rm dlog}\,}
\newcommand{\limdir}[1]{{\displaystyle{\mathop{\rm
lim}_{\buildrel\longrightarrow\over{#1}}}}\,}
\newcommand{\liminv}[1]{{\displaystyle{\mathop{\rm
lim}_{\buildrel\longleftarrow\over{#1}}}}\,}
\newcommand{\boxtensor}{{\Box\kern-9.03pt\raise1.42pt\hbox{$\times$}}}
\newcommand{\sext}{\mbox{${\mathcal E}xt\,$}}
\newcommand{\shom}{\mbox{${\mathcal H}om\,$}}
\newcommand{\coker}{{\rm coker}\,}
\renewcommand{\iff}{\mbox{ $\Longleftrightarrow$ }}
\newcommand{\onto}{\mbox{$\,\>>>\hspace{-.5cm}\to\hspace{.15cm}$}}

\newcommand{\ord}{\mathrm{ord}}


\newcommand{\sA}{{\mathcal A}}
\newcommand{\sB}{{\mathcal B}}
\newcommand{\sC}{{\mathcal C}}
\newcommand{\sD}{{\mathcal D}}
\newcommand{\sE}{{\mathcal E}}
\newcommand{\sF}{{\mathcal F}}
\newcommand{\sG}{{\mathcal G}}
\newcommand{\sH}{{\mathcal H}}
\newcommand{\sI}{{\mathcal I}}
\newcommand{\sJ}{{\mathcal J}}
\newcommand{\sK}{{\mathcal K}}
\newcommand{\sL}{{\mathcal L}}
\newcommand{\sM}{{\mathcal M}}
\newcommand{\sN}{{\mathcal N}}
\newcommand{\sO}{{\mathcal O}}
\newcommand{\sP}{{\mathcal P}}
\newcommand{\sQ}{{\mathcal Q}}
\newcommand{\sR}{{\mathcal R}}
\newcommand{\sS}{{\mathcal S}}
\newcommand{\sT}{{\mathcal T}}
\newcommand{\sU}{{\mathcal U}}
\newcommand{\sV}{{\mathcal V}}
\newcommand{\sW}{{\mathcal W}}
\newcommand{\sX}{{\mathcal X}}
\newcommand{\sY}{{\mathcal Y}}
\newcommand{\sZ}{{\mathcal Z}}


\newcommand{\A}{{\mathbb A}}
\newcommand{\B}{{\mathbb B}}
\newcommand{\C}{{\mathbb C}}
\newcommand{\D}{{\mathbb D}}
\newcommand{\E}{{\mathbb E}}
\newcommand{\F}{{\mathbb{F}}}
\newcommand{\G}{{\mathbb G}}
\newcommand{\HH}{{\mathbb H}}
\newcommand{\I}{{\mathbb I}}
\newcommand{\J}{{\mathbb J}}
\newcommand{\M}{{\mathbb M}}
\newcommand{\N}{{\mathbb N}}
\renewcommand{\P}{{\mathbb P}}
\newcommand{\Q}{{\mathbb Q}}
\newcommand{\R}{{\mathbb R}}
\newcommand{\T}{{\mathbb T}}
\newcommand{\U}{{\mathbb U}}
\newcommand{\V}{{\mathbb V}}
\newcommand{\W}{{\mathbb W}}
\newcommand{\X}{{\mathbb X}}
\newcommand{\Y}{{\mathbb Y}}
\newcommand{\Z}{{\mathbb Z}}


\newcommand{\be}{\begin{eqnarray}}
\newcommand{\ee}{\end{eqnarray}}
\newcommand{\nn}{{\nonumber}}
\newcommand{\dd}{\displaystyle}
\newcommand{\ra}{\rightarrow}
\newcommand{\bigmid}[1][12]{\mathrel{\left| \rule{0pt}{#1pt}\right.}}
\newcommand{\cl}{${\rm \ell}$}
\newcommand{\clp}{${\rm \ell^\prime}$}

\newcommand{\TODO}[1]
{\par\fbox{\begin{minipage}{0.9\linewidth}\textbf{TODO:} #1\end{minipage}}\par}

\providecommand{\amssub}[1]
{
	\small	
	\textbf{{2020 AMS Subject Classification:}} #1
}


\mainmatter

\title{Butson-Hadamard matrices and Plotkin-optimal $p^k$-ary codes}

\titlerunning{Butson-Hadamard Matrices and codes}

\author{Damla Acar$^1$ \and Bülent Saraç$^1$ \and O\u guz Yayla$^2$}
\authorrunning{Acar, Saraç, Yayla}


	\institute{
		$^1$ Hacettepe University, Department of Mathematics,\\Beytepe, 06800, Ankara, Turkey \\
		\email{damla.acar@hacettepe.edu.tr, bsarac@hacettepe.edu.tr}\\   $^2$    Middle East Technical University, Institute of Applied Mathematics,\\  06800, Ankara, Turkey \\ \email{oguz@metu.edu.tr}   
	}

	\maketitle

\begin{abstract}

A  Butson-Hadamard matrix $H$ is a square matrix of dimension $n$ whose entries are complex roots of unity such that   $HH^{*}= nI $.  In the first part of this work, some new results on generalized Gray map  are studied. In the second part, codes obtained from Butson-Hadamard matrices  and some bounds on the minimum distance of these codes are proved. In particular, we show that the code obtained from a Butson-Hadamard matrix meets the Plotkin bound under a non-homogeneous weight. We also give the parameters of some code families which are obtained from modified Butson-Hadamard matrices under a (non)homogeneous Gray map.
\keywords{Butson-Hadamard matrices, codes, generalized Gray map, Plotkin bound}\\
\amssub{05B20,94B60,94B65}
\end{abstract}

\section{Introduction}
A \textit{Hadamard matrix} of order $n$ is an $n\times n $ matrix whose entries are ${1,-1}$ satisfying $HH^{T}= nI_{n}$. Here $I_{n}$ is the $n\times n$ identity matrix.  Hadamard \cite{hadamard1893} conjectured that such matrices  exist for every $n$ that is a multiple of $4$. See \cite{hedayat1978hadamard,horadam2007hadamard,seberry2017orthogonal}   for more information about Hadamard matrices and their applications. Hadamard matrices can be generalized in many ways. Two of them are  Butson-Hadamard (BH) matrices which are introduced by Butson in \cite{butson1962generalized} and generalized Hadamard (GH) matrices by Drake in \cite{drake1979}.
 
A \textit{Butson-Hadamard matrix} $H$ is  an $n \times n$ matrix whose entries are complex roots of unity such that   
\begin{equation*}
HH^{*}=nI_n
\end{equation*}
where $H^{*}$ is the Hermitian transpose of $H$.  If all the entries of $H$ are $k$-th roots of unity then we say that $H$ is a $\BH(n,k)$.

Another generalization of Hadamard matrices is\textit{ (group) generalized
Hadamard} matrices.  
 Let $G=\{g_{1},g_2,\ldots,g_{m}\}$ be a finite group and let   $H$ be an $n\times n$ matrix whose entries are elements of $G$. For convenience, we identify the group $G$ with the naturally embedded copy of $G$  in the group ring $\mathbb{Z}[G]=\{\sum^{m}_{i=1}a_{i}g_{i}|a_{i}\in\mathbb{Z} \}.$ A natural involution on $\mathbb{Z}[G]$, which we shall call conjugation, is defined by
 \begin{equation}
 \overline{(\sum a_{i}g_{i})}:=\sum a_{i}g^{-1}_{i}.
 \end{equation}
 Then, for the matrices with entries in $\mathbb{Z}[G]$, define an adjoint, $M^*=[m_{ij}]^*:=[\overline{m_{ji}}].$ That is the adjoint is performed by first taking the transpose of the matrix and then conjugating each entry.
We call that   an $n\times n$ matrix $H$ whose entries are in $G$ is a \textit{(group) generalized Hadamard matrix} if 
\begin{equation}
HH^*\equiv nI_n\mod \sum_{g \in G}{g}.
\end{equation}
For brevity, we say that such a matrix $H$ is a $\GH(n,G)$.  
 
For example, the following is a $\GH(5,C_{5}$), where $\zeta$ is a generator of the cyclic group $C_{5}$ of order 5.
 
$$ H=\left(
\begin{array}{ccccc}
1&\zeta &\zeta^{4}&\zeta^{4}&\zeta\\
\zeta&1&\zeta&\zeta^{4}&\zeta^{4}\\
\zeta^{4}&\zeta&1&\zeta&\zeta^{4}\\
\zeta^{4}&\zeta^{4}&\zeta&1&\zeta\\
\zeta&\zeta^{4}&\zeta^{4}&\zeta&1\\
\end{array}
\right)$$
Also we can say that a $\GH(n,G)$ can exist only if $n$ is a multiple of $|G|$. See also \cite{jungnickel1979difference} for more information about generalized Hadamard matrices.

It is known that a BH (resp. GH)  matrix can be transformed to an equivalent matrix consisting of 1s in the first row and column by dividing rows or columns, or by interchanging rows or columns. So, a BH (resp. GH) matrix is said to be \textit{normalized} if the first row and column consist entirely of 1s.

In this paper we first  study the parameters for code families where the rows of BH and modified BH matrices are assigned to be codewords.  
The minimum distance between the rows of normalized GH matrices has been studied widely, see \cite[Sections 4.4 and 9.4]{horadam2007hadamard}. In addition to minimum distance, the rank and kernel of the codes obtained from a GH matrix were recently studied in  \cite{dougherty2015ranks,dougherty2020rank}, and codes from a cocyclic GH matrix  and their equivalence to combinatorial difference sets are studied in \cite{armario2018quasi,armario2019generalized}. On the other hand, Greferath, McGuire and O'Sullivan \cite{greferath2006plotkin} showed that the codes obtained from BH matrices meet the Plotkin bound under any homogeneous weight. Stepanov \cite{stepanov2017nonlinear} also constructed codes obtained from modified BH matrices, which have parameters close to the Plotkin bound. 

In this paper, we  give a lower bound for the minimum distance of the codes   obtained from a normalized BH matrix and an amenable BH matrix in Theorem \ref{BHBCD} and \ref{prop:genhomweight}, respectively. Then, we prove their minimum distance under a homegeneous Gray map, in Corollary \ref{prop:homG1}. We also consider codes obtained from the image of the modified BH matrices under a non-homogeneous Gray map, and  we prove their minimum distances in Theorem \ref{thm:Nonhomtranslate}. We also show that the distances between the rows are same, hence the code is equidistant. Moreover, we show that the code meets the Plotkin bound in Corollary \ref{cor:plotkin}.

The paper is organized as follows. We study the generalized Plotkin bound in Section $2$ and generalized Gray map  in Section $3$. In Section $4$ we give some results on the minimum distance of codes obtained from BH and modified BH matrices. The codes obtained from the image of BH matrices under the non-homogeneous Gray map  are given  in Section $5$.

\section{The generalized Plotkin bound}

 Hadamard codes have been widely studied in the literature, see for instance \cite{horadam2007hadamard}.  Greferath et al. in \cite{greferath2006plotkin}   considered BH matrices and determined the parameters of the code obtained from normalized BH matrices, where they considered homogeneous weights. Their result is given below, but we first give the definition of a homogeneous weight.

 \begin{definition}\label{def:cc} \cite{greferath2006plotkin}
 A real-valued function $w$ on the finite ring $R$ is called a homogeneous weight if $w(0)=0$ and the following hold:\\
 (i) $Rx=Ry$ implies $w(x)=w(y)$ for all $x,y\in R$.\\
(ii) There exists a real number $\gamma$ such that 
 \begin{equation*}
     \sum_{y\in Rx}w(y)=\gamma|Rx|,  \forall x\in R \backslash \{0\},
 \end{equation*}
 where the number $\gamma$ is called the average value of $w$ on $R$. 
 \end{definition}
 
  For instance,  the
Lee weight $w_L$ on $\Z_4$ defined by $w_L(0) = 0, w_L(1) = w_L(3) = 1$ and $w_L(2) = 2$ is a homogeneous weight with the average value $\gamma=1$.

A $q$-ary $(n,M,d)$ code is defined to be a nonempty subset of $\Z_q^n$ of size $M$, where d is the minimum distance of the code determined by the distance function induced by a specified homogeneous weight on $\Z_q^n$. For an $(n,M,d)$ code with $d>\gamma n$, the \textit{generalized Plotkin bound} states that $M \leq d/(d - \gamma n)$,  and it is called \textit{Plotkin-optimal} when
\be \label{eqn:plotkin}
M > d/(d - \gamma n) - 1,
\ee
where $\gamma$ is the average value of the specified homogeneous weight.

 \begin{theorem}\label{thm:cc} \cite{greferath2006plotkin}
 Let $H=[\zeta_h^{a_{ij}}]$ be a normalized Butson-Hadamard matrix of type $\BH(n,h)$ for some primitive $h$-th root of unity $\zeta_h$.
 Then the code in $(\Z_{h})^{n-1}$ formed by taking the rows of $H'=[a_{ij}]$ and omitting the first coordinate is a code over $\Z_{h}$ with parameters $(n-1,n,\gamma n)$ that meets the Plotkin bound.
 
 \end{theorem}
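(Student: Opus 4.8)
The plan is to verify in order that the rows of $H'$ give $M=n$ distinct codewords, that any two of them lie at the \emph{same} homogeneous distance $\gamma n$ (so the code is equidistant), and that this value makes the Plotkin bound tight. For distinctness: since $H$ is normalized its first column is constant, so $H'$ has first column $0$; two rows of $H'$ that agree after deleting the first coordinate therefore agree in $\Z_h^{\,n}$, which is impossible for distinct rows of a $\BH(n,h)$ because any two of them are orthogonal under the Hermitian form (their inner product would be $n\neq0$). Hence the code sits in $(\Z_h)^{n-1}$ with exactly $n$ words. Moreover, since $w(0)=0$, the homogeneous distance of two truncated codewords equals the full-length homogeneous distance, so for rows $r\neq s$ with $c_j:=a_{rj}-a_{sj}\in\Z_h$ it suffices to compute $\sum_{j=1}^{n}w(c_j)$.

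The core input is a twisted orthogonality. Orthogonality of rows $r,s$ of $H$ reads $\sum_{j=1}^{n}\zeta_h^{c_j}=0$, but one needs more: for any $u$ coprime to $h$ the automorphism $\sigma_u\colon\zeta_h\mapsto\zeta_h^{u}$ of $\Q(\zeta_h)$ commutes with complex conjugation on roots of unity, so applying $\sigma_u$ entrywise to $HH^{*}=nI_n$ yields $\sigma_u(H)\sigma_u(H)^{*}=nI_n$; that is, $\sigma_u(H)=[(\zeta_h^{u})^{a_{ij}}]$ is again a $\BH(n,h)$, and reading off its $(r,s)$ entry gives
\[
\sum_{j=1}^{n}\zeta_h^{\,u c_j}=0\qquad\text{for every unit }u\bmod h .
\]

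In parallel I would record the additive-character form of the homogeneous weight on $\Z_h$. The constraints of Definition~\ref{def:cc} force $w$ to be constant on each class $\{a:\gcd(a,h)=d\}$ and to satisfy $\sum_{a\in(d)}w(a)=\gamma\,(h/d)$ for every proper divisor $d\mid h$; these linear relations determine $w$ uniquely given $\gamma$ (Möbius inversion over the divisors of $h$, together with the Ramanujan-sum identity $\sum_{\gcd(c,m)=1}\zeta_m^{\,c}=\mu(m)$), and the solution is
\[
w(a)=\gamma\Bigl(1-\frac{1}{\varphi(h)}\sum_{u\in\Z_h^{\times}}\zeta_h^{\,u a}\Bigr),
\]
where $\varphi$ is Euler's function and $\mu$ the Möbius function. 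Substituting this and using the twisted orthogonality from the previous step,
\[
\sum_{j=1}^{n}w(c_j)=\gamma n-\frac{\gamma}{\varphi(h)}\sum_{u\in\Z_h^{\times}}\Bigl(\sum_{j=1}^{n}\zeta_h^{\,u c_j}\Bigr)=\gamma n ,
\]
so the code is equidistant with parameters $(n-1,\,n,\,\gamma n)$. Finally, taking code length $N=n-1$, $M=n$, $d=\gamma n$ and $\gamma>0$ one gets $d-\gamma N=\gamma>0$ and $d/(d-\gamma N)=\gamma n/\gamma=n=M$, so the code attains the generalized Plotkin bound with equality; in particular it satisfies \eqref{eqn:plotkin}.

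I expect the main obstacle to be the middle steps: one genuinely needs the twisted sums $\sum_{j}\zeta_h^{\,u c_j}$ to vanish for \emph{all} units $u$, since the bare orthogonality ($u=1$) does not pin down $\sum_j w(c_j)$, which is what forces the Galois-invariance argument; and one needs the expansion of the homogeneous weight of $\Z_h$ in additive characters, i.e.\ enough control of $w$ to recognize it as the Möbius/Ramanujan weight. The remaining parts — distinctness of the rows and the closing arithmetic with the Plotkin bound — are routine.
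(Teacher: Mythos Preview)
The paper does not supply its own proof of this theorem: it is quoted from \cite{greferath2006plotkin} as background in Section~2 and is never argued within the paper. So there is no in-paper proof to compare against.

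Your argument is correct and is essentially the standard one from the cited source. The two substantive ingredients you isolate are exactly what is needed: the character expansion of the homogeneous weight on $\Z_h$, namely $w(a)=\gamma\bigl(1-\varphi(h)^{-1}\sum_{u\in\Z_h^{\times}}\zeta_h^{ua}\bigr)$, and the observation that each Galois twist $\sigma_u(H)$ is again a $\BH(n,h)$, which forces $\sum_j\zeta_h^{u c_j}=0$ for every unit $u$ rather than just $u=1$. These combine to give the equidistance $d=\gamma n$, and your Plotkin arithmetic with $N=n-1$, $M=n$ is clean. The only point worth making explicit is that the homogeneous weight on $\Z_h$ is \emph{unique} given $\gamma$ (this is where Frobenius-ring structure, or equivalently the invertibility of the divisor-lattice M\"obius system you mention, is used), so the character formula really does capture an arbitrary homogeneous weight as the theorem requires.
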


\section{Generalized Gray maps}
In this section we consider two kinds of generalized Gray maps. At first, we give the generalized Gray map $G_1$, originated  in  \cite{heng2015generalized} (see Definition \ref{def:hom} below). Note that $G_1$ gives rise to a homogeneous weight. As Theorem \ref{thm:cc} states, codes obtained from rows of  BH matrices satisfy the Plotkin bound under any homogeneous weight \cite{greferath2006plotkin}. On the other hand, very little is known when the map is non-homogeneous. Hence, we consider a generalized Gray map $G_2$ which yields a non-homogeneous weight $w_2$ (see Definition \ref{def:non-hom}). We also prove some lemmas on $G_{2}$ in which we give the minimum distance of BH codes under the weight $w_{2}$ defined by $G_{2}$.  

\begin{definition}\label{def:hom}
Let $k$ be a positive integer, $u$ be any element of $\Z_{p^{k}}$ and  $u=\sum_{i=1}^{k}u_i p^{i-1} $ its $p$-ary expansion  for some $u_i \in \{0,1,\ldots,p-1\}$. The image of $u$ by a  \textit{Gray map $G_{1}$}   is defined to be the Boolean function on $GF(p)^{k-1}$: 
\be \nn
\begin{array}{lll}
        G_{1}(u):(y_{1},\ldots, y_{k-1})\rightarrow u_{k}+\sum _{i=1}^{k-1}u_{i}y_{i}.
\end{array}
\ee

\end{definition}

\begin{example}
Suppose that $k=3$ and $p=2$. Then the Gray map for $u$ is defined as follows:
\be \nn
\begin{array}{lll}
        G_1(u):(y_{1},y_{2})\rightarrow u_{3}+ u_{1}y_{1}+u_{2}y_{2},
\end{array}
\ee
where $(y_1,y_2)\in\mathbb{Z}^2_2$, in lexicographical order, are taken as follows: 
\begin{center}
\begin{tabular}{ c|c } 
 
 $y_{2}$ & $y_{1}$ \\ 
 \hline
 0 & 0 \\ 
 0 & 1 \\ 
 1 & 0 \\
 1 & 1 \\
\end{tabular}
\end{center}
If $u=6$, then its binary representation is equal to $(u_{3}u_{2}u_{1})=(110)$.  Therefore if $(y_{1},y_{2})=(0,0)$ then we get $G_{1}(6)=1$. If we continue like this we can get Boolean function $G_{1}(u)=(1100)$.
\end{example}

Let $w$ be the Hamming weight and $w_1$ be a weight  on $\Z_{p^k}$ defined by
$w_1(u) = w(G_1(u))$. We note that $w_{1}(0)=0$ and for any $u\neq0$
\be \nn 
      w_{1}(u):= \left\{ \begin{array}{cc}
      p^{k-1} - p^{k-2} &  \mbox{if } u \in \Z_{p^{k}} \backslash  \{p^{k-1},2p^{k-1},\ldots,(p-1)p^{k-1}\} \\
      p^{k-1}     & \mbox{otherwise.}
\end{array} \right.
\ee

Then one can easily deduce from Definition \ref{def:cc} that $w_1$ is a homogeneous weight. 
In what follows,  we list some properties of $G_1$.

\begin{enumerate}
\item[i.] Let $k$ be a positive integer and $u=\sum _{i=1}^{k}u_{i}2^{i-1}\in \Z_{2^{k}} $ for some $u_{i}\in\{0,1\}$. Then $G_1(u)$ is a homomorphism from $\Z_{2}^{k-1}$ to $\Z_{2}$.

\item[ii.] Let $a,b \in \Z_{{2}^{k}}$ such that $a\neq b$. Then 
\be \nn
d(G_1(a),G_1(b) )= \left\{ \begin{array}{cc} 2^{k-1}  &  \mbox{if } a-b= 2^{k-1} \\
2^{k-2}     & \mbox{otherwise}.
\end{array} \right.
\ee
Here $d(G_1(a),G_1(b))$ is the Hamming distance between $G_1(a)$ and $G_1(b)$.
\end{enumerate}

In this paper we also consider another Gray map which yields a non-homogeneous weight.

\begin{definition}\label{def:non-hom} \cite{yildiz2012generalization}
Let $p>2$ be a prime number and $k$ be a positive integer. A Gray map, denoted $G_2$, on $\Z_{p^k}$ is defined as follows:
\begin{itemize}
    \item[i.] If $u \leq p^{k-1}$, then $G_2(u) \in \Z_{p}^{p^{k-1}}$ has 1 in the first $u$ location and $0$'s elsewhere.
    \item[ii.] If $u \geq p^{k-1}+1$, then $G_2(u) = \bar{q}+G_2(r)$, where  $q$ and $r< p^{k-1}$ are positive integers such that 
$u=qp^{k-1}+r$
and $\bar{q}=(qqq\cdots qqq)$.
\end{itemize}

\end{definition}
Let $w_2$ be the  weight on $\Z_{{p}^{k}}$ defined  by $w_2(u) = w(G_2(u))$. Then we have
\be \nn 
      w_{2}(u):= \left\{ \begin{array}{ccc}
      u &  \mbox{if }& u\leq p^{k-1} \\
      p^{k-1}     & \mbox{if} &p^{k-1}\leq u\leq  p^{k}-p^{k-1}\\
     p^{k}-u & \mbox{if}& p^{k}-p^{k-1}<u\leq p^{k}-1.
\end{array} \right.
\ee

As the following example shows, $w_{2}$ is not homogeneous in general.

\begin{example}
Let $p=3$, $k=2$ and $R=\Z_{9}$. Now if we take $x=2$ and $y=8$, then $Rx=Ry$; but since $G_{2}(2)=(110)$, $G_{2}(8)=(002)$, $w_2(x)$ and $w_2(y)$ are not equal. Therefore $w_2$ is not homogeneous by Definition \ref{def:cc}.
\end{example}

Now we give four lemmas that we will use in the proof of Theorem \ref{thm:Nonhomtranslate}.
\begin{lemma}\label{lemma:numberzero}
Let $m,n\in \Z_{p^{k}}$ with $m,n \neq 0$. If either  $m,n<p^{k-1}$ and $m+n=p^{k-1}$ or $m>n>(p-1)p^{k-1}$ and $m-n=(p-1)p^{k-1}$, then the total number of $0$'s in $G_{2}(m)$ and $G_{2}(n)$ is $p^{k-1}.$   
\end{lemma}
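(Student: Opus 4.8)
The plan is to work directly from the explicit shape of the Gray map $G_2$ and to exhibit $G_2(m)$ and $G_2(n)$ as coordinatewise ``complementary'', in the sense that exactly one of the two vectors vanishes in each of the $p^{k-1}$ positions. First I would normalize the hypothesis. Since $m-n=(p-1)p^{k-1}$ with $m>n\geq 1$ and $m,n\in\{1,\ldots,p^k-1\}$, it follows at once that $n\leq p^k-1-(p-1)p^{k-1}=p^{k-1}-1$, so $n<p^{k-1}$, while $m=(p-1)p^{k-1}+n>(p-1)p^{k-1}$. (In particular the stated disjunction ``$m,n<p^{k-1}$ or $m,n>(p-1)p^{k-1}$'' is automatically forced here — it is the mixed regime, $n$ small and $m$ large, that actually occurs.) Hence the first clause of Definition \ref{def:non-hom} applies to $n$ and gives $G_2(n)=(1,\ldots,1,0,\ldots,0)$ with exactly $n$ ones followed by $p^{k-1}-n$ zeros; and the second clause applies to $m$ with $q=p-1$ and $r=n$ (a legitimate choice since $1\leq n\leq p^{k-1}$ and $q=p-1\geq 1$), giving $G_2(m)=\overline{p-1}+G_2(n)$.

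Next I would compare the two vectors position by position. In coordinate $i$ we have $\bigl(G_2(m)\bigr)_i\equiv (p-1)+\bigl(G_2(n)\bigr)_i \pmod p$, with $\bigl(G_2(n)\bigr)_i\in\{0,1\}$. If $\bigl(G_2(n)\bigr)_i=0$ then $\bigl(G_2(m)\bigr)_i=p-1\neq 0$; if $\bigl(G_2(n)\bigr)_i=1$ then $\bigl(G_2(m)\bigr)_i\equiv p\equiv 0$. Thus in every one of the $p^{k-1}$ coordinates exactly one of $G_2(m),G_2(n)$ carries a zero. Summing over all coordinates, the total number of zeros occurring in $G_2(m)$ and $G_2(n)$ together is exactly $p^{k-1}$, which is the assertion.

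I do not expect a genuine obstacle; the argument is essentially bookkeeping once the shape of $G_2$ is unwound. The only points that need a little care are: (a) verifying that the two clauses of Definition \ref{def:non-hom} that I invoke are indeed the applicable ones, i.e. that $n\leq p^{k-1}$ and $m\geq p^{k-1}+1$ — this is where the hypothesis $p>2$ is quietly used, since it makes $(p-1)p^{k-1}$ strictly larger than $p^{k-1}$; and (b) handling the reduction modulo $p$ in the second clause correctly, the ``carry'' $1+(p-1)=p\equiv 0$ being precisely what makes the zero-patterns of $G_2(m)$ and $G_2(n)$ complementary. If one prefers, the same computation can be packaged as $\#\{\text{zeros of }G_2(n)\}=p^{k-1}-n$ and $\#\{\text{zeros of }G_2(m)\}=n$, whose sum is visibly $p^{k-1}$.
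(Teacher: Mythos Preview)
Your proof is correct and follows essentially the same route as the paper: both arguments observe that $n<p^{k-1}$ is forced, apply the two clauses of Definition~\ref{def:non-hom} to count $p^{k-1}-n$ zeros in $G_2(n)$ and $n$ zeros in $G_2(m)$, and add. The paper's proof is in fact precisely the terse count you offer in your final sentence; your coordinatewise complementarity observation is a correct and slightly finer reformulation of the same computation.
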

\begin{proof}
If $m,n<p^{k-1}$ and $m+n=p^{k-1}$ then the result is clear from the definition of $G_{2}$.
Let $m>n$ and $m-n=(p-1)p^{k-1}.$ Since $m>n$ and $m=n+(p-1)p^{k-1}$ we see that $n<p^{k-1}$. Therefore, the number of $0$'s in $G_{2}(n)$ is equal to $p^{k-1}-n$.  On the other hand, $G_{2}(m)=G_{2}((p-1)p^{k-1}+n)$, and by definition, the number of $0$'s in $G_{2}(m)$ is $n.$ This completes the proof.  
\end{proof}
\begin{example}
Let $p=3$, $k=3$. 
 If we take $n=8$ and $m=n+(p-1)p^{k-1}=26$ then we can easily find that  $G_{2}(n)=G_{2}(8)=(111111110)$ and $G_{2}(m)=G_{2}(26)=(000000002)$. Therefore the total number of $0$'s in $G_{2}(m)$ and $G_{2}(n)$ is equal to $9=p^{k-1}.$
\end{example}

\begin{lemma}\label{dG}
Let $p$ be a prime number, $k\in \Z^{+}$, $i,j\in \Z_{p^{k}}$ and $|i-j|=n$. Then
\be \nn 
      d(G_{2}(i),G_{2}(j))= \left\{ \begin{array}{cc}
      n &  \mbox{if }  n< p^{k-1} \\
      p^{k-1}     & otherwise
    .
\end{array} \right.
\ee
\end{lemma}
\begin{proof}
First we consider $n<p^{k-1}$. Without loss of generality we take $i>j$. If $i\leq p^{k-1}$ then  $G_{2}(i)=(111\ldots 11000\ldots 00)$ by the definition of $G_{2}$, where it has the number $1$ in the first $i$ coordinates and the others are $0$. We also have $j<p^{k-1}$ and $j=i-n$ as $j<i.$ Therefore $G_{2}(j)=(111\ldots 11000\ldots 00)$ such that the first $i-n$ coordinates are $1$ and the others are $0$. So $d(G_{2}(i),G_{2}(j))=n$. Now if  $i>p^{k-1}$ then we can write $i=qp^{k-1}+r$ for some $q,r\in \Z^{+}$ such that $r<p^{k-1}.$  Then $G_{2}(i)=\overline{q}+G_{2}(r)$. Also $j=i-n $ and $n<p^{k-1}$. Now, if $n<r$ then we can say that $j=qp^{k-1}+r-n$ and $r-n<p^{k-1}$. Therefore $d(G_{2}(i),G_{2}(j))=n$. If $n\geq r$ then $j=(q-1)p^{k-1}+p^{k-1}-(n-r).$ Hence $G_{2}(i)=(q+1\quad q+1 \ldots q+1\underbrace {qq\ldots qq}_{p^{k-1}-r})$ and $G_{2}(j)=(qq\ldots qq \underbrace{q-1\quad q-1 \ldots q-1}_{n-r})$. Also we can say that $p^{k-1}-r>n-r$ since $n>r, n<p^{k-1}$. Therefore there are $p^{k-1}-r-n+r$ same coordinates in $G_{2}(i)$ and $G_{2}(j)$. So $d(G_{2}(i),G_{2}(j))=n$.

Now suppose that $j=qp^{k-1}+r, n=mp^{k-1}+r'$ and $r+r'=lp^{k-1}+r''$ for some $q,m,l,r,r',r''\in\Z^{+}$ such that $r,r',r''<p^{k-1}$. Then $i=(q+m+l)p^{k-1}+r''$ and since $i<p^{k}$ we must have $q+m+l<p$. It follows that $G(i)=\overline{q+m+l}+G_{2}(r'')$ and $G_{2}(j)=\overline{q}+G_{2}(r)$.  Since $r\neq r''$, we obtain $d(G_{2}(i),G_{2}(j))=p^{k-1}$. Also it is clear that if $j\leq p^{k-1}$, then $d(G_{2}(i),G_{2}(j))=p^{k-1}$. 
\end{proof} 

\begin{lemma}\label{totalzero}
Let $p$ be a prime number, $k\in\Z^{+}$, $\zeta$ be a primitive $p^{k}$-th root of unity and $a_i \in \Z^+$ for $i=1,2,\ldots,n$. 
If $\zeta ^{a_1}+\cdots +\zeta ^{a_n}=0 $, then the total number of zeros in $[G_{2}(a_{1}),G_{2}(a_{2}),\ldots ,G_{2}(a_{n})]$ is equal to $np^{k-2}$.
\end{lemma}
\begin{proof}
We know that $a_{i}$'s satisfying  $\zeta^{a_{1}}+\zeta^{a_{2}}+\cdots+\zeta^{a_{n}}=0$ will only be of the form $a_{i}=ip^{x}+j$, where $x<k, j<p^{x}$ and $i=1,2,\ldots,p^{k-x}$, see \cite[Theorem 3.3]{lam2000vanishing}.  Here we will give a proof only for the case $j=0$, since we will give the case where $j$ is nonzero in Corollary. \ref{lem:number_of_0} 
 We know that obtaining $0$ in the images $G_{2}(a_{i})$ for $i=1,2,\ldots,n$ is only possible for $1\leq a_{i}< p^{k-1}$ or $(p-1)p^{k-1}<a_{i}\leq p^{k}$. So the number of $0$ in  $G_{2}(a_{1}), G_{2}(a_{2}),\ldots, G_{2}(a_{p^{k-x}})$  are equal to $p^{k-1}-p^{x}, p^{k-1}-2p^{x},\ldots, (p^{k-1-x}-1)p^{x}$, respectively. Here there are $p^{k-1-x}-1$ elements. On the other hand, the number of zero in $G_{2}(a_{i})$ for $(p-1)p^{k-1}<a_{i}\leq p^{k}$ are $p^{x},2p^{x},\ldots ,(p^{k-1-x}-1)p^{k-1},p^{k-1}$, in order. Therefore, the total number of zeros in  $G_{2}(a_{1}),G_{2}(a_{2}),\linebreak \ldots ,G_{2}(a_{p^{k}-x})$  is 
 \begin{equation*}
     (p^{k-1-x}-1)p^{k-1}+p^{k-1}=p^{k-2}p^{k-x}=np^{k-2}.
 \end{equation*}
\end{proof}
\begin{example}
Let $p=3,k=3$ and $\zeta$ be the primitive $27-$th root of unity. Also we know that $\zeta^{3}+\zeta^{6}+\zeta^{9}+\zeta^{12}+\zeta^{15}+\zeta^{18}+\zeta^{21}+\zeta^{24}+\zeta^{27}=0$. Then $G_{2}(3)=(111000000), G_{2}(6)=(111111000), G_{2}(9)=(111111111), G_{2}(12)=(222111111), G_{2}(15)=(222222111), G_{2}(18)=(222222222), G_{2}(21)=(000111111)$, $ G_{2}(24)=(000000111),G_{2}(27)=(000000000)$. Total number of zeros in these images is equal to $np^{k-2}=27.$
\end{example}

\section{Butson-Hadamard codes}

In this section we will study codes obtained via assigning rows of normalized BH matrices and their translates as codewords. Namely, let $\zeta$ be a primitive $k$-th root of unity, $H=[\zeta^{a_{ij}}]$ be a normalized $\BH(n,k)$ matrix and $[a_{ij}]$ be the matrix with entries taken from the exponents of corresponding entries of $H$. Then codes are constructed from the rows of $[a_{ij}]$.

Codes from normalized generalized Hadamard matrices and their translates are widely studied, see \cite[Chapter 4]{horadam2007hadamard} and references therein. Besides, codes constructed from modified quaternary complex BH matrices are given  in \cite{stepanov2006new}, which are  close to the Plotkin bound. Another type modified $\BH$ matrices and the codes obtained from them are studied in  \cite{lee2006error}.

\begin{definition}
Let $n,k$ be positive integers and $\zeta$ be a primitive $k$-th root of unity, $H=[\zeta^{a_{ij}}]$ be a normalized $BH(n,k)$ matrix. Suppose that $N$ be the set of all $k$-th roots of unity.  If $R_{i}$ is the $i$-th row of H then, $uR_{i}$ is called a translate of $R_{i}$ for some $u\in N$.
\end{definition}

We use the definitions given in \cite[Definition 4.32]{horadam2007hadamard} and we take the Hamming weight for the codes obtained from BH matrices in the following theorem. 

\begin{theorem}\label{BHBCD}
Let $n,k$ be positive integers, $\zeta$ be a primitive $k$-th root of unity, $H=[\zeta^{a_{ij}}]$ be a normalized $\BH(n,k)$ matrix and $N$ be the set of all $k$-th roots of unity. Let $H'=[a_{ij}]$ and $l=\min  \{i\geq 2: i|k\}$. 
\begin{enumerate}
\item[i.] $ A_{k}$ the $k$-ary $(n-1,n,d_{A})$ code consisting of the rows of $H'$ with the first column deleted. Here  $d_{A}\geq n-\dfrac{n}{l}$. 
\item[ii.] $B_{k}$ the $k$-ary $(n-1,nk,d_{B})$ code consisting of the rows of exponent matrix of the translates $uH$,  $u\in N$ with first column deleted. Here  $d_{B}\geq n-\dfrac{n}{l}-1$
\item[iii.] $C_{k}$ the $k$-ary $(n,nk,d_{C})$ code consisting of the rows of exponent matrix of the translates $uH$, $u\in N$. Here $d_{C}\geq n-\dfrac{n}{l}$
\item[iv.] $D_{k}$, $n=k$, the $k$-ary $(n+1,n^{2},d_{D})$ code consisting of the rows of exponent matrix of $(uH)c$ for all $u\in N$ and any fixed noninitial column $c$ of $H$. Here $d_{D}\geq  n-\dfrac{n}{l}$.
\end{enumerate}
\end{theorem}
\begin{proof}
\begin{enumerate}
    \item [i.] 
We first note that $l$ is a prime number. If $\zeta$ is a primitive $k$-th  root of unity then $\zeta^{\frac{k}{l}}$ is  $l$-th root of unity. Then $(\zeta^{\frac{k}{l}})+(\zeta^{\frac{k}{l}})^2+\ldots+(\zeta^{\frac{k}{l}})^l=0$. This implies that we must have at least $l$ elements in order to get vanishing sum. On the other hand, since the rows of BH matrices are orthogonal, the number of same elements in two rows of $H$ must be smaller than $\frac{n}{l}$.
That is $n-d_A\leq \frac{n}{l}$. So we have $d_A\geq n-\frac{n}{l}$.
     \item [ii.] Let $u=\zeta^{m}$ be an element of $N$. Then it is clear that the codes $A_{k}$ and  shifting of $A_{k}$ with $m$ have same minimum distance. Now let $R_{i}$ and $R^u_{i}$ be the $i$-th row of $H$ and $uH$, respectively, with the first column deleted for $i=1,2,\ldots,n$. So $d(R_{i},R^u_{j})=n-1$ if $i=j$. On the other hand, as $R^u_{j}=uR_{j}$ we have $d(R_{i},R^u_{j}) = n-1 - \#\{u |u\in R_{i}^{-1}R_{j}^{u}\}$ for $i \neq j$. If we use the same argument in the proof of (i), we can say that $\#\{u |u\in R_{i}^{-1}R_{j}^{u}\} \le \frac{n}{l}$. Therefore $d_{B}\geq  n-\frac{n}{l}-1$.
    \item [iii.] $C_{k}$ has codewords of length $n$. Similar to the proof of (ii), we have $d_{C}\geq n-\frac{n}{l}$
    \item[iv.]  $D_{k}$ and $C_{k}$  are only one column different from each other. Therefore $d_{D}\geq n-\frac{n}{l}$.
\end{enumerate}
\end{proof}

\begin{example}\label{Ex:BH(9,10)}
Let $\zeta$ be a primitive $36$-th root of unity and 
$$H_1=\left(
\begin{array}{cccccccccccc}
1&1&1&1&1&1&1&1&1&1&1&1\\
1&\zeta^{12}&\zeta^{24}&\zeta^{28}&\zeta^{4}&\zeta^{16}&1&\zeta^{12}&\zeta^{24}&1&\zeta^{12}&\zeta^{24}\\
1&\zeta^{24}&\zeta^{12}&\zeta^{20}&\zeta^{8}&\zeta^{32}&1&\zeta^{24}&\zeta^{12}&1&\zeta^{24}&\zeta^{12}\\
1&\zeta^{27}&1&1&1&1&\zeta^{18}&\zeta^{9}&\zeta^{18}&\zeta^{18}&\zeta^{18}&\zeta^{18}\\
1&\zeta^{3}&\zeta^{24}&\zeta^{28}&\zeta^{4}&\zeta^{16}&\zeta^{18}&\zeta^{21}&\zeta^{6}&\zeta^{18}&\zeta^{30}&\zeta^{6}\\
1&\zeta^{15}&\zeta^{12}&\zeta^{20}&\zeta^{8}&\zeta^{32}&\zeta^{18}&\zeta^{33}&\zeta^{30}&\zeta^{18}&\zeta^{6}&\zeta^{30}\\
1&1&1&\zeta^{18}&\zeta^{18}&\zeta^{18}&\zeta^{9}&1&1&\zeta^{27}&\zeta^{18}&\zeta^{18}\\
1&\zeta^{12}&\zeta^{24}&\zeta^{10}&\zeta^{22}&\zeta^{34}&\zeta^{9}&\zeta^{12}&\zeta^{24}&\zeta^{27}&\zeta^{30}&\zeta^{6}\\
1&\zeta^{24}&\zeta^{12}&\zeta^{2}&\zeta^{26}&\zeta^{14}&\zeta^{9}&\zeta^{24}&\zeta^{12}&\zeta^{27}&\zeta^{6}&\zeta^{30}\\
1&\zeta^{27}&1&\zeta^{18}&\zeta^{18}&\zeta^{18}&\zeta^{27}&\zeta^{9}&\zeta^{18}&\zeta^{9}&1&1\\
1&\zeta^{3}&\zeta^{24}&\zeta^{10}&\zeta^{22}&\zeta^{34}&\zeta^{27}&\zeta^{21}&\zeta^{6}&\zeta^{9}&\zeta^{12}&\zeta^{24}\\
1&\zeta^{15}&\zeta^{12}&\zeta^{2}&\zeta^{26}&\zeta^{14}&\zeta^{27}&\zeta^{33}&\zeta^{30}&\zeta^{9}&\zeta^{24}&\zeta^{12}
\end{array}
\right)$$
be a $\BH(12,36)$ matrix. Here the minimum distance between the rows of $H_1$ is equal to $d_A=7$ by using SageMath \cite{sagemath}, therefore $d_A \geq n-\frac{n}{l}$ is satisfied, where $n=12$ and $l=min\{i \geq 2:i|36 \} =2$.
Similarly,
for a primitive $10$-th root of unity, $\zeta$, let
$$H_2=\left(
\begin{array}{ccccccccc}
1&1&1&1&1&1&1&1&1\\
1&\zeta^{5}&\zeta^{3}&\zeta^{3}&\zeta^{5}&\zeta^{9}&\zeta^{8}&\zeta^{7}&\zeta\\
1&\zeta^{4}&\zeta^{5}&\zeta^{7}&\zeta&\zeta^{3}&\zeta^{5}&\zeta^{9}&\zeta^{9}\\
1&\zeta^{3}&\zeta^{7}&\zeta^{5}&\zeta&\zeta^{8}&\zeta^{9}&\zeta^{3}&\zeta^{5}\\
1&\zeta^{9}&\zeta&\zeta^{5}&\zeta^{5}&\zeta^{3}&\zeta^{7}&\zeta^{2}&\zeta^{7}\\
1&\zeta^{9}&\zeta^{5}&\zeta&\zeta^{3}&\zeta^{5}&\zeta&\zeta^{7}&\zeta^{6}\\
1&\zeta&\zeta^{7}&\zeta^{9}&\zeta^{6}&\zeta&\zeta^{5}&\zeta^{5}&\zeta^{3}\\
1&\zeta^{7}&\zeta^{9}&\zeta^{4}&\zeta^{9}&\zeta^{5}&\zeta^{3}&\zeta^{5}&\zeta\\
1&\zeta^{5}&\zeta^{2}&\zeta^{9}&\zeta^{7}&\zeta^{7}&\zeta^{3}&\zeta&\zeta^{5}
\end{array}
\right)$$
be a $\BH(9,10)$ matrix. It can be verified that  the minimum  distance between the rows of $H_2$ is equal to $d_A=7$ and satisfies $d_A \geq n-\frac{n}{l}$, where $n=9$ and $l=2$. 
Now consider $B_{k}$. In this case, we have $A_{k}$ and its translates as codewords. If we calculate the minimum distance by SageMath, we get $d_{B}=5$ and so $d_B \geq n(1-\frac{1}{l}) -1$. Again, by using SageMath, we can get that the minimum distance for $C_{k}$ is $d_{C}=6$, which satisfies $d_{C} \geq n-\frac{n}{l}$.

Let $\zeta$ be a primitive $6$-th root of unity. Then 
$$H_2=\left(
\begin{array}{cccccc}
1&1&1&1&1&1\\
1&\zeta&\zeta^{2}&\zeta^{3}&\zeta^{4}&\zeta^{5}\\
1&\zeta^{2}&\zeta^{4}&1&\zeta^{2}&\zeta^{4}\\
1&\zeta^{3}&1&\zeta^{3}&1&\zeta^{3}\\
1&\zeta^{4}&\zeta^{2}&1&\zeta^{4}&\zeta^{2}\\
1&\zeta^{5}&\zeta^{4}&\zeta^{3}&\zeta^{2}&\zeta\\
\end{array}
\right)$$
is a $\BH(6,6)$ matrix. If we calculate the minimum distance for $D_{k}$ by SageMath, we  get $d_D=3$, so $d_D \geq n(1-\frac{1}{l})$ holds.

\end{example}

\begin{remark} We note that it is easy to get a code with larger alphabet size without decreasing the minimum distance by combining two BH matrices under Chinese Remainder Theorem.
Let $h,k$ be distinct prime numbers, $\zeta$, $\alpha$ and $\beta$ be primitive $h$-th, $k$-th and $hk$-th  roots of unities, respectively. Also  $H_{1}=[\zeta^{a_{ij}}]$ and $H_{2}=[\alpha^{b_{ij}}]$ be normalized $\BH(n,h)$ and $\BH(n,k)$ matrices, respectively. Let $d_{1}$ and $d_{2}$ be the minimum distance between the rows of $[a_{ij}]$ and $[b_{ij}]$, respectively. Let $H=[\beta^{c_{ij}}]$ be the normalized matrix obtained from $H_{1}$ and $H_{2}$  such that $c_{ij}\equiv a_{ij} \mod h$ and $c_{ij}\equiv b_{ij} \mod k$. Then the minimum distance of rows of $[c_{ij}]$ satisfies $d \geq \max\{d_{1},d_{2}\}$. 
\end{remark}

\begin{proposition}\label{theo:amenable BH}
Let $R$ be a Frobenius ring and let $A=[a_{ij}]$ be a $n\times n$ matrix over $R$. Suppose that for any $1\leq k,l\leq n$ with $k\neq l$, the set 
\be \label{eq:disjoint union}
 R_{k}-R_{l}:=\{a_{k1}-a_{l1},a_{k2}-a_{l2},\ldots,a_{kn}-a_{ln}\},
\ee
is a disjoint union of the cosets of right or left ideals of $R$. Here $R_{k}$ and $R_{l}$ are the $k$-th and $l$-th rows of $A$, respectively. Then for any generating character $\chi$ of $R$, the matrix $H=[\chi(a_{ij})]$ is a Butson-Hadamard matrix.
\end{proposition}
\begin{proof}
Suppose that for $1\leq i,j\leq n$ with $i\neq j$, $S_{i}$ and $S_{j}$ be the distinct rows of $H$. Then $S_{i}=(\chi(a_{i1}), \chi(a_{i2}),\ldots,\chi(a_{in}))$ and $S_{j}=(\chi(a_{j1}), \chi(a_{j2}),\ldots,\chi(a_{jn}))$. Therefore $S_{i}S_{j}^{*}=\displaystyle\sum_{k=1}^{n}\chi(a_{ik}-a_{jk})$. We know that the sum of the images under $\chi$ over a right (or left) ideal of $R$ is equal to $0$. Then for a fixed element $a\in R$ and a right (or left) ideal $I$ of $R$, we have $\displaystyle\sum_{i\in I}\chi(a+i)=\displaystyle\sum_{i\in I}\chi(a)\chi(i)=\chi(a)\sum_{i\in I}\chi(i)=0$; so we can say that $S_{i}S_{j}^{*}=\displaystyle\sum_{k=1}^{n}\chi(a_{ik}-a_{jk})=0.$ 

  If $i=j$ then $\displaystyle\sum_{k=1}^{n}\chi(a_{ik}-a_{jk})=\displaystyle\sum_{k=1}^{n}\chi(a_{ik}-a_{jk})=n$. Therefore $H$ is a Butson-Hadamard matrix. 
\end{proof}
\begin{definition}\label{def:amenable BH}
Let $R$ be a Frobenius ring and  $A=[a_{ij}]$ be an $n\times n$ matrix over $R$. Also suppose that $A$  satisfies (\ref{eq:disjoint union}). Then for any generating character $\chi$ of $R$, the Butson-Hadamard matrix $H=[\chi(a_{ij})]$ is called an \emph{amenable} Butson-Hadamard matrix.   
\end{definition}
\begin{example}
Suppose that $R$ is a finite Frobenius ring of characteristic $t$ and $\chi$ is a generating character for $R$, $B:L\times M\rightarrow R$ be a nondegenerate bilinear pairing for the finite left $R$-module $L$ and finite right $R$-module $M$. Then $H=[\chi(B(x,y))]_{L\times M}$ is a BH(t,m) Butson-Hadamard matrix, where $m=|L|=|M|$  by \cite[Theorem 10]{mcguire2009cocyclic}. All matrices created in this way are amenable Butson-Hadamard matrices since the elements in the row differences in $[B(x,y)]_{L\times M}$ form left ideals of $R$.
\end{example}

  Bilinear pairings give us amenable Butson-Hadamard matrices, but not all amenable $BH$ matrices have to be of this type. The matrix below is a counterexample.
\begin{example}
Let $R=\Z_{6}$ and $A$ be a $6\times 6$ matrix over $R$ given as below.
$$A=\left(
\begin{array}{ccccccc}
0&0&0&0&0&0&0\\
0&0&0&2&3&3&4\\
0&1&3&5&0&3&3\\
0&2&5&4&3&0&2\\
0&3&4&2&5&2&0\\
0&4&2&0&3&4&1\\
0&4&2&3&1&0&4
\end{array}
\right)$$
\end{example}
It is readily checked that all pairwise row differences of $A$ are the unions of the cosets of $3\Z_{6}$ and $2\Z_{6}$.  Thus, if  $\chi:R\longrightarrow \mathbb {C}, \chi(a)=\zeta^{a}$, where $\zeta$ is a primitive $6$-th root of unity, then $H=[\chi(a_{ij})]$ is an amenable $BH(7,6)$ matrix. 

\begin{example}\label{ex:prime_power_bh_amenable)}
All matrices of type $BH(n,p^{k})$ are amenable $BH$ matrices where $p$ is a prime number and $k,n\in\Z^{+}$, because every row difference in the additive representation of a $BH(n,p^{k})$ forms up a union of cosets of ideals $p^{i}Z_{p^{k}}$  for some $1\leq i\leq k-1$, see  \cite[Theorem 3.3]{lam2000vanishing}. 
\end{example}
\begin{remark}
The authors of this article do not know the existence of a Butson-Hadamard matrix which is not amenable.
\end{remark}

In Theorem \ref{BHBCD} we have given lower bounds for the minimum distances of codes defined via rows of BH matrices. But in the following theorem we have obtained equality for these minimum distances by taking BH matrices as amenable.  

 Now suppose that $w$ is a homogeneous weight, $l=\max\{w(a) : a\in \Z_{k}\}$, $l'=\min\{w(a) : a\in\Z_{k}\backslash\{0\}\}$ and $\gamma$ is the average value of $w$.
\begin{theorem}
\label{prop:genhomweight}
Let $n$ and $k$ be positive integers, $H=[\zeta^{a_{ij}}]$ be a normalized amenable $\BH(n,k)$ matrix, $H'=[a_{ij}]$ and $N$ be the set of all $k$-th roots of unity.  Also suppose that   $A_{k}, B_{k}, C_{k}$ and $D_{k}$ are defined  as in Theorem $\ref{BHBCD}$. Then $d_{A_{k}}=\gamma n, d_{B_{k}}=\gamma n-l,  d_{C_{k}}=\gamma n$ and $d_{D_{k}}\in\{\gamma n, \gamma n+l'\}$.
\end{theorem}
\begin{proof}
We know that $d_{H}=\gamma n$ by Theorem \ref{thm:cc}. Now consider $B_{k}$. Let $R_{i}, R_{j}$ be the $i$-th  and $j$-th  rows of $H$, respectively. Also suppose that $R'_{j}$ is the $j$-th row of $\zeta^{u}H$ such that $1\leq i,j\leq n$ and $i\neq j, u\in Z_{t}$. So $R'_{j}=\zeta^{u}R_{j}$. If we use the same argument in the proof of \ref{thm:cc} we can say that $f_{r+u}=|\{k\in \{1,\ldots, n\} : \log_{\zeta}(R_{ik}R^{-1}_{jk})=r+u \}|/n=f_{r}$ for all $r,u\in \Z_{t}$. Here $\log_{\zeta}(R_{ik}R^{-1}_{jk})$ is the $k$-th coordinate of $R_{i}R_{j}^{-1}.$ Therefore the proof of the Theorem \ref{def:cc}  is also provided here as well. So $\sum_{k=1}^{k=n}(w(R_{ik}R^{-1}_{jk}))=\gamma n$. But here  since the first coordinates of $R_{i}$ and $R'_{j}=uR_{j}$ will always be different if we delete the first coordinate, the minimum distance decrease by $l$ . After all this we get $d_{B_{k}}=\gamma n-l$.

 The $C_{k}$ code differs from $B_{k}$ only in the first coordinate. Therefore, from the results we obtained above we get $d_{C_{k}}=\gamma n.$
  
 Finally we consider $D_{k}$ code. Since $D_{k}$ is obtained by adding  a column to $C_{k}$ we can say that $d_{D_{k}}\in\{\gamma n, \gamma n+l'\}$.
\end{proof}

As noted Example \ref{ex:prime_power_bh_amenable)}, all $BH$ matrices in the following corollary are amenable.
\begin{corollary}\label{prop:homG1}
Let  $p>2$ be a prime number, $ k\in \Z^{+}$ such that $k\geq 2$ and $\zeta $ be a primitive $p^{k}$-th  root of unity,    $H=[\zeta^{a_{ij}}]$  be an $n\times n$ normalized Butson-Hadamard matrix and $N$ be the set of all $p^{k}$-th roots of unity.  Also let $H'=[a_{ij}]$ 
and define $A_{k},B_{k},C_{k},D_{k}$ codes 
as in Theorem \ref{BHBCD}. Then,
$d_{G_{1}(A_{k})}=n(p-1)p^{k-2} $, $d_{G_{1}(B_{k})}=n(p-1)p^{k-2}-p^{k-1}$, $d_{G_{1}(C_{k})}=n(p-1)p^{k-2}$, $d_{G_{1}(D_{k})}\in\{p^{2k-2}(p-1),p^{k-2}(p^{k+1}-p^{k}+p-1)\}$.
\end{corollary}
\begin{proof}

$d_{G_{1}(A_{k})}$ is a consequence of Theorem \ref{thm:cc}.  We need to find the $\gamma$ average value of $w$. Now since $G_{1}$ is balanced except for the elements of  $L=\{p^{k-1},2p^{k-1},\ldots, p^{k}\}$. Namely for every $1\leq \alpha \leq p^k$ such that $\alpha \notin L$ there are equal number of $0,1,\ldots ,p-1$ in  $G_{1}(\alpha)$. For any $\beta\in L\backslash \{0\},  w(\beta)=p^{k-1}$.   Therefore $l=p^{k-1}-p^{k-2}, l'=p^{k-1}$ and
\begin{equation*}
    \gamma=\frac{(p-1)p^{k-1}+(p^{k}-p)(p^{k-1}-p^{k-2})}{p^{k}}=p^{k-1}-p^{k-2}=(p-1)p^{k-2}.
\end{equation*}
So the result is obtained when the $\gamma$ is written instead of the Theorem \ref{prop:genhomweight}.
\end{proof}

Now we will give an example of  Theorem \ref{prop:genhomweight}.
\begin{example} \label{ex:homG1}
Let $p=3$,  $k=2$ and $\zeta$ be the primitive $9$-th root of unity. Then 
$$H=\left(
\begin{array}{ccccccccc}
1&1&1&1&1&1&1&1&1\\
1&\zeta &\zeta^{2}&\zeta^{3}&\zeta^{4}&\zeta^{5}&\zeta^{6}&\zeta^{7}&\zeta^{8}\\
1&\zeta^{2}&\zeta^{4}&\zeta^{6}&\zeta^{8}&\zeta&\zeta^{3}&\zeta^{5}&\zeta^{7}\\
1&\zeta^{3}&\zeta^{6}&1&\zeta^{3}&\zeta^{6}&1&\zeta^{3}&\zeta^{6}\\
1&\zeta^{4}&\zeta^{8}&\zeta^{3}&\zeta^{7}&\zeta^{2}&\zeta^{6}&\zeta&\zeta^{5}\\
1&\zeta^{5}&\zeta&\zeta^{6}&\zeta^{2}&\zeta^{7}&\zeta^{3}&\zeta^{8}&\zeta^{4}\\
1&\zeta^{6}&\zeta^{3}&1&\zeta^{6}&\zeta^{3}&1&\zeta^{6}&\zeta^{3}\\
1&\zeta^{7}&\zeta^{5}&\zeta^{3}&\zeta&\zeta^{8}&\zeta^{6}&\zeta^{4}&\zeta^{2}\\
1&\zeta^{8}&\zeta^{7}&\zeta^{6}&\zeta^{5}&\zeta^{4}&\zeta^{3}&\zeta^{2}&\zeta
\end{array}
\right)$$
is an amenable $\BH(9,9)$ matrix. Then 
$$G_{1}(H')=\left(
\begin{array}{ccccccccccccccccccccccccccc}
0&0&0&0&0&0&0&0&0&0&0&0&0&0&0&0&0&0&0&0&0&0&0&0&0&0&0\\
0&0&0&0&1&2&0&2&1&1&1&1&1&2&0&1&0&2&2&2&2&2&0&1&2&1&0\\
0&0&0&0&2&1&1&2&0&2&2&2&2&1&0&0&1&2&1&1&1&1&0&2&2&0&1\\
0&0&0&1&1&1&2&2&2&0&0&0&1&1&1&2&2&2&0&0&0&1&1&1&2&2&2\\
0&0&0&1&2&0&2&1&0&1&1&1&2&0&1&0&2&1&2&2&2&0&1&2&1&0&2\\
0&0&0&1&0&2&0&1&2&2&2&2&0&2&1&2&0&1&1&1&1&2&1&0&1&2&0\\
0&0&0&2&2&2&1&1&1&0&0&0&2&2&2&1&1&1&0&0&0&2&2&2&1&1&1\\
0&0&0&2&0&1&1&0&2&1&1&1&0&1&2&2&1&0&2&2&2&1&2&0&0&2&1\\
0&0&0&2&1&0&2&0&1&2&2&2&1&0&2&1&2&0&1&1&1&0&2&1&0&1&2
\end{array}
\right).$$
If we calculate the average weight we get $\gamma=2$. 
By using SageMath \cite{sagemath}, we obtain that  $d_{G_{1}(A_{k})}=18=\gamma n$, $d_{G_{1}(B_{k})}=15=\gamma n-l, d_{G_{1}(C_{k})}=18=\gamma n$. Here  $l=\max\{w(a) : a\in \Z_{k}\setminus\{0\}\}=3$. Let $c$ be the $4$-th column of $H'$. Then we get $d_{G_{1}(D_{k})}=18$. These minimum distances comply with the results in Theorem \ref{prop:genhomweight}.
\end{example}

If $w$ is a homogeneous weight with the average value $\gamma$ over a Frobenius ring $R$, then $\sum_{r\in y+I}w(r)=\gamma |I|$ for every nonzero ideal $I$ of $R$, see \cite[Proposition 2.6]{greferath2006plotkin}, a property which plays a crucial role in proving that some linear codes produced from a bimodule over $R$ by using a non-degenerated bilinear form meet the Plotkin bound \cite[Theorem 4.3]{greferath2006plotkin}. Note that this important property of a homogeneous weight is shared also by the non-homogeneous weight $w_2$ induced by the Gray map $G_2$ with $\gamma=(p-1)p^{k-2}$, see Corollary \ref{cor:w_2 is quasi-homogeneous} below. In the following definition we formalize such a weight.
\begin{definition}
 A weight $w$ on the finite ring $R$ is called  \emph{quasi-homogeneous} if $w(0)=0$ and there exists a real number $\gamma$ such that 
 \[
     \sum_{r\in a+I}w(r)=\gamma|I|,  
 \]
 for each nonzero ideal $I$ of $R$. The number $\gamma$ is called the \emph{average value} of $w$. 
 \end{definition}

 \begin{proposition}\label{prop:quasi-homogeneous weight}
 Let $k$ be a positive integer and $\gamma$ be a positive real number. Define a mapping $w:\mathbb{Z}_{p^k}\rightarrow\mathbb{R}$ by
 \[
 w(u)=
 \begin{cases}
 \frac{\gamma}{p^{k-2}(p-1)}u, &\quad \text{if } u_{k-1}=0 \\
 \frac{\gamma p}{p-1}, &\quad \text{if } 0<u_{k-1}\le p-2\\
 \frac{\gamma p^2}{p-1}-\frac{\gamma}{p^{k-2}(p-1)}u, & \quad \text{if } u_{k-1}=p-1
 \end{cases}
 \]
for all $u\in \mathbb{Z}_{p^k}$ with the  $p$-ary expansion $u=u_0+u_1p+\cdots+ u_{k-1}p^{k-1}$. Then $w$ is a quasi-homogeneous weight on $\mathbb{Z}_{p^k}$ with the average value $\gamma$.
 \end{proposition}
 \begin{proof}
 Fix $0\le s\le k-1$. We show that for any $a\in \mathbb{Z}_{p^k}$, \[\sum_{u\in a+\langle p^s\rangle}w(u)=\gamma p^{k-s},\]which completes the proof. Note that it is enough to assume  $0\le a\le p^s-1$. Note also that for any $u\in a+\langle p^s\rangle$, there exists a unique $n\in\mathbb{Z}_{p^k}$ such that $0\le n\le p^{k-s}-1$ and $u=a+np^s$. Write $a=a_0+a_1p+\cdots+a_{s-1}p^{s-1}$ and $n=n_0+n_1p+\cdots+n_{k-s-1}p^{k-s-1}$, where $0\le a_i,n_j\le p-1$ for each $i=0,\ldots,s-1$ and $j=0,\ldots,k-s-1$. Hence, a typical element $u$ of $a+\langle p^s\rangle$ can be written uniquely as \[u=a_0+a_1p+\cdots+a_{s-1}p^{s-1}+n_0p^s+n_1p^{s+1}+\cdots+n_{k-s-2}p^{k-2}+n_{k-s-1}p^{k-1},\]where the $a_i$'s and $n_j$'s all lie in $\{0,\ldots p-1\}$. We separate the sum of $w(u)$'s, where $u$ ranges over $a+\langle p^s\rangle$, into three sums as follows:
 
 (I) The sum of the $w(u)$'s for $u\in a+\langle p^s\rangle$ with $n_{k-s-1}=0$, i.e., \[\frac{\gamma}{p^{k-2}(p-1)}\left[p^{k-s-1}a+p^s\frac{p^{k-s-1}(p^{k-s-1}-1)}{2}\right];\]
 
 (II) the sum of the $w(u)$'s for $u\in a+\langle p^s\rangle$ with $0\le n_{k-s-1}\le p-2$, i.e., \[\frac{\gamma p(p-2)p^{k-s-1}}{p-1};\]
 
 (III) the sum of the $w(u)$'s for $u\in a+\langle p^s\rangle$ with $n_{k-s-1}=p-1$, i.e., \[\frac{\gamma p^2p^{k-s-1}}{p-1}-\frac{\gamma}{p^{k-2}(p-1)}\left[p^{k-s-1}a+\frac{p^sp^{k-s-1}(p^{k-s-1}-1)}{2}+p^{k-s-1}(p-1)p^{k-1}\right].\] It is now easy to see that the numbers in (I)--(III) sum up to $\gamma p^{k-s}$.
 \end{proof}

\begin{corollary}\label{cor:w_2 is quasi-homogeneous}
 The weight $w_2$ on $\mathbb{Z}_{p^k}$, where $p>2$ is a prime number and $k\in \mathbb{Z}^+$, is a quasi-homogeneous weight with the average value $\gamma=p^{k-2}(p-1)$. 
 \end{corollary}
 \begin{proof}
 Notice that $w_2$ coincides with the weight defined in Proposition \ref{prop:quasi-homogeneous weight} with $\gamma=p^{k-2}(p-1)$.
 \end{proof}
 \begin{remark}
 In view of the proof of Proposition \ref{prop:homG1}, we see that both weights $w_1$ and $w_2$ share the same average value $\gamma=(p-1)p^{k-2}$. In fact, $w_1$ is the only homogeneous weight on $\mathbb{Z}_{p^k}$ with the average value $\gamma=(p-1)p^{k-2}$, see Theorem 2.2 in \cite{greferath2006plotkin}. It follows that our consideration of quasi-homogeneous weights enables us to work  with more useful weights with the same common average value and flourishes our repository of  weights in this manner.
 \end{remark}
 \begin{corollary}\label{lem:number_of_0}
 Let $p>2$ be a prime number, $k$ be a positive integer, and $i \in \{1,2,\ldots,k-1\}$. Given any $j\in\mathbb{Z}_{p^k}$, the total number of $0$'s in the elements of the set  $\{G_2(u):u\in j+\langle p^i\rangle\}$ is independent of $j$ and equals $p^{2k-i-2}$.
 \end{corollary}
 \begin{proof}
 By Corollary \ref{cor:w_2 is quasi-homogeneous}, the total number of $0$'s in the elements of the set  $\{G_2(u):u\in j+\langle p^i\rangle\}$ is \begin{align}
 p^{k-1}p^{k-i}-\sum_{u\in j+\langle p^i\rangle}w_2(u)&=p^{2k-i-1}-p^{k-2}(p-1)p^{k-i}\nonumber \\
 &=p^{2k-i-2} \nonumber
 \end{align}
 \end{proof}
 \begin{example}

Choose $p=5$, $k=2$, $i=1$ and $j=3$. The total number of $0$'s in $G_{2}(5)=(11111), G_{2}(10)=(22222), G_{2}(15)=(33333), G_{2}(20)=(44444), G_{2}(25)=(00000)$ is $p^{2k-i-2}=5$.
On the other hand, the total number of $0$'s in   $G_{2}(8)=(22211), G_{2}(13)=(33322), G_{2}(18)=(44433), G_{2}(23)=(00044), G_{2}(3)=(11100)$ is $p^{2k-i-2}=5$. Hence, the total number of $0$'s in both sets is the same.
    
\end{example}
\begin{proposition}\label{prop:codewithquasihom}
Let $p>2$ be a prime number, $k\in \Z^{+}$ such that $k\geq 2$ and $\zeta$ be a primitive $p^{k}$-th root of unity, $H=[\zeta^{a_{ij}}]$ be an $n\times n$ normalized Butson-Hadamard matrix, $H'=[a_{ij}]$ and $w'$ be a quasi-homogeneous weight with the average value $\gamma$.
Then the minimum distance of the code consisting the rows of $H'$ with the first column deleted is equal to $\gamma n$.
\end{proposition}
\begin{proof}
Since $H$ is a Butson-Hadamard matrix we know that there exists $m\in\Z^{+}$ such that $n=mp$. We will the proof by induction on $m$.
 
For $m=1$ we have a $BH(p,p)$ matrix.Let $R_{i}, R_{j}$ be the $i$-th and $j$-th rows of $H'$, respectively.  Hence the elements of $R_{i}-R_{j}$ consists of the elements of $p^{k-1}\Z_{p^{k}}$ or the cosets of this ideal. More clearly $R_{i}-R_{j}= \{p^{k-1},2p^{k-1},\ldots, p^{k}\}$  or $\{p^{k-1}+i,2p^{k-1}+i,\ldots, p^{k}+i\}$ for all $1\leq i\leq p^{k-1}-1$. So for $I=p^{k-1}\Z_{p^{k}}$ we can say that $\displaystyle \sum_{r\in I+y}w'(r)=\gamma p=\gamma n$. Now let $m>1$ be given and suppose that the theorem is true for all $m=l$ with $l\in\Z^{+}$. Let us show the theorem is true for $m=l+1$. Suppose that $m=l+1$. We know that $d=\gamma n$ for $n=l$. If we take $n=l+1$ this differs from the case $n=l$ only $p$ elements.The $p$ elements again only come from the $p^{k-1}\Z_{p^{k}}$ ideal or the cosets of this ideal.  Therefore $d=\gamma lp+\gamma p=\gamma p(l+1)=\gamma n$. This ends the proof.
\end{proof}
\begin{theorem}\label{thm:Nonhomtranslate}
Let  $p>2$ be a prime number, $ k\in \Z^{+}$ such that $k\geq 2$ and $\zeta $ be a primitive $p^{k}$-th  root of unity,    $H=[\zeta^{a_{ij}}]$  be an $n\times n$ normalized Butson-Hadamard matrix and $N$ be the set of all $p^{k}$-th roots of unity.  Also let $H'=[a_{ij}]$  
and define $A_{k},B_{k},C_{k},D_{k}$ codes
as in Theorem \ref{BHBCD}. Then, 

$d_{G_{2}(A_{k})}= n(p-1)p^{k-2}$, $d_{G_{2}(B_{k})}=n-1$, $d_{G_{2}(C_{k})}=n$, $d_{G_{2}(D_{k})}\in \{ n,n+1\}$.
\end{theorem}

\begin{proof} 
We can say that $d_{G_{2}(A_{k})}=\gamma n$ by Proposition \ref{prop:codewithquasihom} and Corollary \ref{cor:w_2 is quasi-homogeneous} we get $d_{G_{2}(A_{k})}=n(p-1)p^{k-2}$.

Now consider $G_{2}(B_{k})$. Let $R_{i}=[a_{i2} \ldots a_{in}]$ and $R'_{i}=[a'_{i2} \ldots a'_{in}]$ be the $i$-th row of the  exponent matrix of $H$ and $\zeta H$ with deleted first column, respectively. Then $|a_{ij}-a'_{ij}|=1$, so  we get  $d(G_{2}(a_{ij}),G_{2}(a'_{ij}))=1$ for $j=2,3,\ldots,n$ by Lemma \ref{dG}. Therefore the  distance between $G_{2}(R_{i})$ and $G_{2}(R'_{i})$   is equal to $n-1.$ Now suppose that  $R'_{j}$ be the $j$-th row of the exponent matrix of $uH$ for an arbitrary $u\in N$. By Corollary \ref{lem:number_of_0}  and Lemma \ref{totalzero}, we can say 
\begin{equation*}
  d(G_{2}(R_{i}),G_{2}(R'_{j}))=(n-1)p^{k-1}-np^{k-2}=p^{k-2}(n(p-1)-1)
\end{equation*}
for $i\neq j$.
 Since $n-1<p^{k-2}(n(p-1)-1)$,  the minimum distance of $G_{2}(B_{k})$ is $d_{G_{2}(B_{k})}=n-1$. 
 
We next consider   $G_{2}(C_{k})$. Its codewords are constructed from the rows of the exponent matrix of $uH$ for $u \in N$. Similar to (ii), by considering $i$-th rows $R_i$ and $R'_i$ of $H$ and $\zeta H$ respectively,  we have $d(G_{2}(R_{i}), G_{2}(R'_{i}))=n$. And so, $d_{G_{2}(C_{k})}=n$.

Finally the code $G_2({D_{k}})$ is the Gray image of the rows of $uH$ concatenated with a non-initial column $c$ of  $H$. So, by using (iii), we can say that $d_{G_2({D_{k}})}$ is either $n$ or $n+1$.
\end{proof}

Now we can directly obtain a Plotkin optimal code form $G_2$ image of $A_k$ in the following corollary. 
\begin{corollary} \label{cor:plotkin}
The code $G_2(A_k)$ is a Plotkin-optimal $p$-ary nonlinear $((n-1)p^{k-1},n,n(p-1)p^{k-2})$ code under the non-homogeneous $w_2$ weight. Similarly $G_{2}(B_{k}), G_{2}(C_{k})$ and $G_{2}(D_{k})$ are Plotkin-optimal $p$-ary nonlinear $((n-1)p^{k-1},np^{k},n-1), (np^{k-1},np^{k},n), ((n+1)p^{k-1},np^{k},d_{G_{2}(D_{k})})$ codes under the non-homogeneous $w_{2}$ weight where $d_{G_{2}(D_{k})}$ is in $\{n,n+1\}$.
\end{corollary}

\begin{remark}
Theorem \ref{thm:Nonhomtranslate} is not a special case of Proposition \ref{prop:homG1}. Because the weight we use in Theorem \ref{thm:Nonhomtranslate} is non-homogeneous.  We know that the sum of elements of a row in any normalized Butson-Hadamard matrix is equal to $0$. Therefore, the code $G_2(A_k)$  is constant weight and equidistant code. 
\end{remark}
 \begin{example}\label{BH(9,9)}
Let $H=[\zeta^{a_{ij}}]$ be a $\BH(9,9)$ matrix defined as in Example \ref{ex:homG1}. Then 

$$H'=[G_{2}(a_{ij})]=\left(
\begin{array}{ccccccccccccccccccccccccccc}
0&0&0&0&0&0&0&0&0&0&0&0&0&0&0&0&0&0&0&0&0&0&0&0&0&0&0\\
0&0&0&1&0&0&1&1&0&1&1&1&2&1&1&2&2&1&2&2&2&0&2&2&0&0&2\\
0&0&0&1&1&0&2&1&1&2&2&2&0&0&2&1&0&0&1&1&1&2&2&1&0&2&2\\
0&0&0&1&1&1&2&2&2&0&0&0&1&1&1&2&2&2&0&0&0&1&1&1&2&2&2\\
0&0&0&2&1&1&0&0&2&1&1&1&0&2&2&1&1&0&2&2&2&1&0&0&2&2&1\\
0&0&0&2&2&1&1&0&0&2&2&2&1&1&0&0&2&2&1&1&1&0&0&2&2&1&1\\
0&0&0&2&2&2&1&1&1&0&0&0&2&2&2&1&1&1&0&0&0&2&2&2&1&1&1\\
0&0&0&0&2&2&2&2&1&1&1&1&1&0&0&0&0&2&2&2&2&2&1&1&1&1&0\\
0&0&0&0&0&2&0&2&2&2&2&2&2&2&1&2&1&1&1&1&1&1&1&0&1&0&0
\end{array}
\right).$$
$G_{2}(A_{k})$ is consisting of the rows of $H'$ with deleted first 3 columns as codewords. By SageMath \cite{sagemath}, we have    $d_{A_{k}}= 18$ which satisfies $d_{A_{k}}=n(p-1)p^{k-2}$. Similarly, by using SageMath, we get that $d_{G_{2}(B_{k})}=8=n-1$, $d_{G_{2}(C_{k})}=9=n$ and $d_{G_{2}(D_{k})}=9=n$, where $D_{k}$ is obtained by concatenating the $4$-th column of $H$ to $C_k$.
\end{example}

\section{Conclusion}
In this paper we studied modified Butson-Hadamard(BH) matrices  and their applications to coding theory. 
  First we obtained new results about non-ho\-mo\-ge\-ne\-ous Gray map. Then we used these results to prove some results. We give a lower bound for the minimum distance of the codes which consist  of the rows of BH matrices. Then  we determine the parameters of the codes which  occur as the images of rows of BH matrices and their translates under the homogeneous and non-homogeneous Gray maps. Here we achieve equidistant and  nonlinear codes. Also  we conclude that the codes obtained the images of the rows of BH matrices under a non-homogeneous  Gray map are Plotkin optimal codes.

\section*{Acknowledgement}

Oğuz Yayla has been supported by Middle East Technical University - Scientific Research Projects Coordination Unit under grant number 10795. Damla Acar has been supported by YÖK 100/2000 program and TÜBİTAK-BİDEB-2213-A Scholarship.

	\bibliographystyle{spmpsci}      %
	\bibliography{power}

\end{document}